\theoremstyle{plain}
\newtheorem{theorem}{Theorem}
\newtheorem{proposition}[theorem]{Proposition}
\theoremstyle{definition}
\newtheorem*{definition}{Definition}
 \DeclareMathOperator{\Id}{\mathrm{Id}}
 \DeclareMathOperator{\dist}{dist\,}
\DeclareMathOperator{\Area}{Area}
\DeclareMathOperator{\NA}{NA}
\newcommand{\K}{\mathbb{K}}
\newcommand{\R}{\mathbb{R}}
\newcommand{\N}{\mathbb{N}}
\newcommand{\eps}{\varepsilon}
\renewcommand{\geq}{\geqslant}
\renewcommand{\leq}{\leqslant}
\begin{document}
	\title{There is no operatorwise version of the Bishop--Phelps--Bollob\'as property}

\dedicatory{Dedicated to the memory of Bernardo Cascales}

\author[Dantas]{Sheldon Dantas}
\address[Dantas]{Departament of Mathematics, POSTECH, 790-784, (Pohang), Republic of Korea \newline
\href{http://orcid.org/0000-0001-8117-3760}{ORCID: \texttt{0000-0001-8117-3760}  }}
\email{\texttt{sheldongil@postech.ac.kr}}

\author[Kadets]{Vladimir Kadets}
\address[Kadets]{School of Mathematics and Computer Sciences \\
V. N. Karazin Kharkiv National University \\ pl.~Svobody~4 \\
61022~Kharkiv \\ Ukraine
\newline
\href{http://orcid.org/0000-0002-5606-2679}{ORCID: \texttt{0000-0002-5606-2679} }
}
\email{\texttt{v.kateds@karazin.ua}}

\author[Kim]{Sun Kwang Kim}
\address[Kim]{Department of Mathematics, Chungbuk National University, 1 Chungdae-ro, Seowon-Gu, Cheongju,
Chungbuk 28644, Republic of Korea \newline
\href{http://orcid.org/0000-0002-9402-2002}{ORCID: \texttt{0000-0002-9402-2002}  }}
\email{\texttt{skk@chungbuk.ac.kr}}

\author[Lee]{Han Ju Lee}
\address[Lee]{Department of Mathematics Education, Dongguk University -- Seoul, 04620 (Seoul), Republic of Korea \newline
\href{http://orcid.org/0000-0001-9523-2987}{ORCID: \texttt{0000-0001-9523-2987}  }
}
\email{\texttt{hanjulee@dongguk.edu}}

\author[Mart\'{\i}n]{Miguel Mart\'{\i}n}
\address[Mart\'{\i}n]{Departamento de An\'{a}lisis Matem\'{a}tico, Facultad de
 Ciencias, Universidad de Granada, 18071 Granada, Spain \newline
\href{http://orcid.org/0000-0003-4502-798X}{ORCID: \texttt{0000-0003-4502-798X} }
 }
\email{\texttt{mmartins@ugr.es}}

\begin{abstract} Given two real Banach spaces $X$ and $Y$ with dimensions greater than one, it is shown that there is a sequence $\{T_n\}_{n\in \N}$ of norm attaining norm-one operators from $X$ to $Y$ and a point $x_0\in X$ with $\|x_0\|=1$, such that
	$$
    \|T_n(x_0)\|\longrightarrow 1 \quad \text{ but } \quad
	\inf_{n\in \N} \bigl\{\dist\bigl(x_0,\,\{x\in X\colon
    \|T_n(x)\|=\|x\|=1\}\bigr)\bigr\} >0.
    $$
This shows that a version of the Bishop--Phelps--Bollob\'{a}s property in which the operator is not changed is possible only if one of  the involved Banach spaces is one-dimensional.
\end{abstract}

\date{June 25th, 2018}

\thanks{The first author was supported by Pohang Mathematics Institute (PMI), POSTECH, Korea and Basic Science Research Program through the National Research Foundation of Korea (NRF) funded by the Ministry of Education, Science and Technology (NRF-2015R1D1A1A 09059788). The research of the second author is done in frames of Ukrainian Ministry of Science and Education Research Program 0118U002036, and it was partially supported by Spanish MINECO/FEDER projects MTM2015-65020-P and MTM2017-83262-C2-2-P. Third author was partially supported by Basic Science Research Program through the National Research Foundation of Korea(NRF) funded by the Ministry of Education, Science and Technology (NRF-2017R1C1B1002928). Fourth author was partially supported by Basic Science Research Program through the National Research Foundation of Korea (NRF) funded by the Ministry of Education, Science and Technology (NRF-2016R1D1A1B03934771). Fifth author partially supported by Spanish MINECO/FEDER grant MTM2015-65020-P}

\subjclass[2010]{Primary 46B04; Secondary  46B20}
\keywords{Banach space; norm attaining operators; Bishop--Phelps--Bollob\'{a}s property}

\maketitle

\thispagestyle{plain}

\section{Introduction}
Let $X$ be a Banach space. We denote by $X^*$, $S_X$, and $B_X$ the topological dual, the unit sphere, and the closed unit ball of $X$, respectively. We say that $x^* \in X^*$ \emph{attains its norm} (or that $x^*$ is a \emph{norm attaining functional}) if there exists $x_0 \in S_X$ such that $|x^*(x_0)| = \|x^*\| = \sup_{x \in S_X} |x^*(x)|$. It is well-known that the set of all norm attaining functionals $\NA(X)$ is always norm-dense in $X^*$. This is the famous 1961 Bishop--Phelps theorem \cite{BP}. Shortly after this result was established, Bollob\'as \cite{Bol} sharped it in the following way:  given $0 < \eps < 1/2$, $x \in B_X$, and $x^* \in S_{X^*}$ satisfying that $|1 - x^*(x)| < \eps^2/2$, there are $x_0 \in S_X$ and $x_0^* \in S_{X^*}$ such that $x_0^*(x_0) = 1$, $\|x_0 - x\| < \eps$ and $\|x_0^*- x^*\| < \eps$ (we are giving the statement in a little bit improved
form, which can be found in \cite{CKG} or \cite{ChKMMR-2014}).

If $X$, $Y$ are Banach spaces, we denote by $\mathcal{L}(X, Y)$ the space of all bounded linear operators from $X$ to $Y$ and we say that $T \in \mathcal{L}(X, Y)$ \emph{attains its norm} (or that $T$ is \emph{norm attaining}) if there is $x_0 \in S_X$ such that $\|T(x_0)\| = \|T\| = \sup_{x \in S_X} \|T(x)\|$. Lindenstrauss \cite{Lind} was the first one who studied the possible validity of the Bishop--Phelps theorem for operators, i.e., the density of the set of norm attaining operators between two Banach spaces. He showed that such density is not always true and also gave some conditions on the involved Banach spaces $X$ and $Y$ to get the density of the set of norm attaining operators. We refer to the survey paper \cite{Acosta-RACSAM} for an account of the results on this area. In 2008, M.~Acosta, R.~Aron, D.~Garc\'ia, and M.~Maestre \cite{AAGM} introduced the so-called Bishop--Phelps--Bollob\'as property to check when we can get a Bollob\'as' type theorem for bounded linear operators. More precisely, a pair $(X, Y)$ of Banach spaces has the \emph{Bishop--Phelps--Bollob\'as property} (\emph{BPBp} for short) if, given $\eps > 0$, there is $\eta(\eps) > 0$ such that whenever $T \in \mathcal{L}(X, Y)$ with $\|T\| = 1$ and $x_0 \in S_X$ satisfy $\|T(x_0)\| > 1 - \eta(\eps)$, there are $S \in \mathcal{L}(X, Y)$ with $\|S\| = 1$ and $x_1 \in S_X$ such that $\|S(x_1)\| = 1$, $\|x_1 - x_0\| < \eps$, and $\|S - T\| < \eps$. Among other results, they showed that any pair of finite dimensional Banach spaces have the BPBp and characterized the pairs $(\ell_1, Y)$ to satisfy it via a geometric property on $Y$. After 2008, a lot of attention was given to this topic and there is a vast literature about the Bishop--Phelps--Bollob\'as property. We refer the reader to the very recent papers \cite{AMS,CGKS,Cho-Choi,DGMM} and references therein.  It is important to remark that the Bishop--Phelps--Bollob\'{a}s property has geometric consequences on the involved Banach spaces. For instance, if $X$ is a finite-dimensional Banach space, then all operators from $X$ into any other Banach space $Y$ attain their norm but, unless the dimension of $X$ is equal to one, it is possible to construct a renorming $\widetilde{X}$ of $X$ and to find a Banach space $Y$ such that the pair $(\widetilde{X},Y)$ fails the BPBp \cite[Theorem 3.1]{ACKLM}.

In the last years, some variations of the BPBp have appeared in the literature. For instance, there is a property, stronger than the BPBp, in which only the operator moves: a pair $(X, Y)$ of Banach spaces has the \emph{pointwise Bishop--Phelps--Bollob\'as property} \cite{DKKLM, DKL} if given $\eps>0$, there is $\eta(\eps) > 0$ such that whenever $T \in \mathcal{L}(X, Y)$ with $\|T\| = 1$ and $x_0 \in S_X$ satisfy $\|T(x_0)\| > 1 - \eta(\eps)$, there is $S \in \mathcal{L}(X, Y)$ with $\|S\| = 1$ such that $\|S(x_0)\| = 1$ and $\|S - T\| < \eps$. That is, the new operator $S$ attains its norm at the same point at which $T$ almost attains its norm. This property has deep consequences on the structure of the involved spaces as, for instance, if a pair $(X,Y)$ has the pointwise Bishop--Phelps--Bollob\'{a}s property, then $X$ has to be uniformly smooth \cite[Proposition 2.3]{DKL} (actually,  if $Y$ is equal to the base field, this characterizes uniform smoothness). If $(X,Y)$ has the pointwise Bishop--Phelps--Bollob\'{a}s property for every Banach space $Y$, then the space $X$ also has to be uniformly convex with a power type \cite[Theorem 3.1]{DKKLM}.

Thinking on an ``operatorwise'' version of the above property, the following definition appeared in \cite{D} (with the name of ``property 2''), where it is shown that many pairs of (even finite-dimensional) Banach spaces fail it.

\begin{definition}[\mbox{\cite[Definition 2.8]{D}}]
Let $X$, $Y$ be Banach spaces. The pair $(X,Y)$ has \emph{property (P2)} if given $\eps>0$, there exists $\bar\eta(\eps)>0$ such that whenever $T\in \mathcal{L}(X,Y)$ with $\|T\|=1$ and $x_0\in S_X$ satisfy that
\begin{equation*}
\|T(x_0)\|> 1 - \bar\eta(\eps),
\end{equation*}
then there is $x_1\in S_X$ such that
\begin{equation*}
	\|T(x_1)\|=1 \quad  \text{and} \quad  \|x_1 - x_0\|<\eps.
\end{equation*}
\end{definition}

For the case when $Y$ is the base field, this property had appeared earlier in \cite{KL}, where it is proved that a Banach space $X$ is uniformly convex if and only if the pair $(X,\K)$ has property (P2) \cite[Theorem 2.1]{KL}. On the other hand, it is immediate that the pairs of the form $(\K,Y)$ have property (P2) for every Banach space $Y$. Our aim in this paper is to prove that, for {\bf real} Banach spaces, these are the only possible cases in which property (P2) can be satisfied: if the real Banach spaces $X$ and $Y$ have dimension greater than or equal to two, then the pair $(X,Y)$ fails property (P2).

Let us finally comment that there is a property weaker than property (P2) also introduced in \cite{D} (with the name of property 1) where the function $\eps\longmapsto \eta(\eps)$ depends on the operator $T$. This property is satisfied, for instance, by the pairs $(\ell_p,\ell_q)$ for $1\leq q<p<\infty$ \cite{D} and it has some geometric consequences as it has been pointed out in \cite{Talponen}.

\vspace*{1cm}

We would like to dedicate this paper to the memory of our dear friend \emph{Bernardo Cascales}, who passed away last April, 2018. Bernardo was an enormous mathematician who in the last years worked, among many other topics, on the Bishop--Phelps--Bollob\'{a}s property. His deep knowledge of functional analysis, his enthusiasm, and his nice way to explain mathematics, have had an huge impact both on the BPBp and on the people working on it. We would like to highlight the following references \cite{cas-alt23, CKG, CGKS} containing his contributions to this field.

\section{The Result}

Let us state the main result of the paper.

\begin{theorem}\label{theorem:dualpropertyisnotpossible}
Let $X$ and $Y$ be real Banach spaces of dimension greater than or equal to $2$. Then the pair $(X,Y)$ fails property (P2). In other words, one may find a sequence $\{T_n\}_{n\in \N}$ of norm attaining norm-one elements of $\mathcal{L}(X,Y)$ and a point $x_0\in S_X$, such that
	$$
    \|T_n(x_0)\|\longrightarrow 1 \quad \text{ and } \quad
	\inf_{n\in \N} \bigl\{\dist\bigl(x_0,\,\{x\in S_X\colon
    \|T_n(x)\|=1\}\bigr)\bigr\} >0.
	$$
\end{theorem}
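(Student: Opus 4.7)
The plan is to construct, for each $n$, a norm-attaining norm-one operator $T_n\colon X\to Y$ of rank at most two, together with a fixed $x_0\in S_X$, such that $\|T_n(x_0)\|\to 1$ while the norm-attainment sets $\{x\in S_X\colon \|T_n(x)\|=1\}$ stay at uniformly positive distance from $x_0$.

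First I fix 2-dimensional subspaces $X_0\subset X$ and $Y_0\subset Y$. Applying the Auerbach basis theorem in $X_0$, I pick $x_0,v\in S_{X_0}$ and functionals $x_0^{(0)*},v^{(0)*}\in S_{X_0^*}$ satisfying the biorthogonality relations $x_0^{(0)*}(x_0)=v^{(0)*}(v)=1$ and $x_0^{(0)*}(v)=v^{(0)*}(x_0)=0$, and I use Hahn--Banach to extend $x_0^{(0)*},v^{(0)*}$ to $x_0^*,v^*\in S_{X^*}$. I carry out the analogous construction in $Y_0$, obtaining biorthogonal unit vectors $y_0,y_1\in S_{Y_0}$ and functionals $y_0^*,y_1^*\in S_{Y^*}$.

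With these ingredients, I set
\[
T_n(x)\;:=\;(1-1/n)\,x_0^*(x)\,y_0\;+\;d_n\,v^*(x)\,y_1,
\]
for a parameter $d_n>0$ to be tuned. Immediate computations give $T_n(x_0)=(1-1/n)y_0$, so $\|T_n(x_0)\|_Y=1-1/n$; and applying $y_0^*,y_1^*$ termwise, combined with the triangle inequality, yields $\|T_n\|\leq(1-1/n)+d_n$. I wish to choose $d_n$ so that $\|T_n\|$ is strictly larger than $1-1/n$ (so $x_0$ is not a norm-attainer), the ratio $\|T_n(x_0)\|/\|T_n\|$ tends to $1$ (so that after normalization $\|\widetilde T_n(x_0)\|\to 1$), and the supremum defining $\|T_n\|$ is achieved at some $x_n\in S_X$ with $|v^*(x_n)|\geq c>0$ uniformly in $n$. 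Since $v^*(x_0)=0$ and $\|v^*\|=1$, this yields $\|x_n-x_0\|_X\geq c$ for every $n$.

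The hard part is the explicit choice of $d_n$ and the verification that $T_n$ genuinely attains its norm. Attainment is delicate because $X$ may be infinite-dimensional, so the image $(x_0^*,v^*)(B_X)\subset\R^2$ need not be closed; however, $x_0^*$ attains at $x_0$ and the Hahn--Banach extension of $v^{(0)*}$ can be arranged to be norm-attaining (because $v^{(0)*}$ already attains its norm on a unit vector of $X_0$), and a careful analysis of the extreme points of the closure $\overline{(x_0^*,v^*)(B_X)}$ should show the supremum defining $\|T_n\|$ is realized at a genuine $x_n\in B_X$. The uniform geometric condition $|v^*(x_n)|\geq c$ reduces to analyzing the maximum of $\|(1-1/n)sy_0+d_n ty_1\|_Y$ on the symmetric convex set $(x_0^*,v^*)(B_X)\subset[-1,1]^2$; depending on whether the unit ball of $Y$ in the basis $\{y_0,y_1\}$ behaves more like $\ell_\infty^2$ or like $\ell_1^2$, one should take $d_n$ slightly larger than $1-1/n$ (forcing attainment near $\pm v$) or $d_n\to 0$ rapidly (forcing attainment at corner-type points of the image away from $(\pm 1,0)$). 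Unifying these into a single choice of $d_n$ that works for all real $X,Y$ of dimension at least two simultaneously is the principal technical challenge of the proof.
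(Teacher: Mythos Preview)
Your proposal is not a proof: it is a framework with its central difficulty explicitly left open. You yourself say that ``unifying these into a single choice of $d_n$ that works for all real $X,Y$ \ldots\ is the principal technical challenge of the proof''---but that challenge \emph{is} the theorem. Worse, the diagonal ansatz $T_n=(1-1/n)x_0^*\otimes y_0+d_n\,v^*\otimes y_1$ cannot be rescued by a clever choice of $d_n$ alone. Take $X=\ell_2^2$ and $Y=\ell_1^2$ with the standard Auerbach bases: then $\|T_n x\|=(1-1/n)|x_1|+d_n|x_2|$ on $x_1^2+x_2^2=1$, whose maximum is $\sqrt{(1-1/n)^2+d_n^2}$, attained at $\bigl((1-1/n)/r,\,d_n/r\bigr)$. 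For the maximizer to stay away from $x_0=e_1$ you need $d_n$ bounded below, whereas for $\|T_n x_0\|/\|T_n\|\to 1$ you need $d_n\to 0$. Similar obstructions arise for $Y=\ell_p^2$, $1<p<2$. You might hope to repair this by choosing a different Auerbach basis in $Y$, but you give no mechanism for doing so, and for smooth strictly convex $Y$ the supply of Auerbach bases is very limited. In addition, your treatment of norm attainment in the infinite-dimensional case is only a sketch: the image $(x_0^*,v^*)(B_X)$ need not be closed, and the argument you outline does not establish that the supremum is achieved.

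The paper takes a completely different route. It first reduces to the two-dimensional case via a \emph{quotient} of $X$ (not a subspace), which automatically makes every operator norm-attaining by compactness. The heart of the argument is then a geometric construction (Proposition~\ref{BPBpp:dual2}) producing a single norm-one operator $T\colon X\to Y$ (with $\dim X=\dim Y=2$) such that $T(B_X)\cap S_Y$ contains two points $y_1,y_2$ with $y_2$ at positive distance from the faces $F(y_1^*)\cup F(-y_1^*)$. This construction is not diagonal: when $X$ is Hilbert it uses John's maximal ellipsoid in $B_Y$, and when $X$ is not Hilbert it exploits the Day--Nordlander theorem on the modulus of convexity to find four directions where the norms of $X$ and $Y$ interact in a prescribed way, followed by a careful homotopy through projections $P_\lambda=\lambda\,\mathrm{Id}+(1-\lambda)P$. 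The final sequence $T_n$ is then obtained as $P_{\lambda_n}T$ for $\lambda_n\to 1$, and the face condition on $y_2$ is exactly what forces the norm-attainment set of $T_n$ to stay in $T^{-1}\bigl(F(y_1^*)\cup F(-y_1^*)\bigr)$, hence uniformly far from $x_0=T^{-1}(y_2)$. None of these ideas---John's ellipsoid, Nordlander's area identity, the projection homotopy, or the reduction via quotients---appears in your outline.
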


The proof of this result is rather involved, so we will present it divided into several steps. We start with the reduction to the case of $X$ and $Y$ being two-dimensional Banach spaces.

\begin{proposition} \label{BPBpp:dual4} Let $X$ and $Y$ be Banach spaces of dimension greater than or equal to $2$. Suppose that the pair $(X, Y)$ has property (P2). If $Y_0 \leq Y$ and $X_0 \leq X$ are such that $\dim(Y_0) = \dim (X/X_0) = 2$, then the pair $(X/X_0, Y_0)$ has property (P2).	
\end{proposition}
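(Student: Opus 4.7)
The plan is to transfer a candidate (P2) situation for the pair $(X/X_0, Y_0)$ upstairs to the pair $(X,Y)$ via the quotient map $q \colon X \to X/X_0$ (note $\|q\|=1$) and the isometric inclusion $j \colon Y_0 \hookrightarrow Y$, apply the assumed property (P2) of $(X,Y)$, and then project the resulting witness back down to $X/X_0$. Specifically, given $\bar T \in \mathcal{L}(X/X_0, Y_0)$ of norm one and $\bar x_0 \in S_{X/X_0}$ with $\|\bar T(\bar x_0)\|$ close to $1$, I would form $T := j \circ \bar T \circ q \in \mathcal{L}(X,Y)$, which automatically has norm one.

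Next, I would lift $\bar x_0$ to a point $x_0 \in X$ with $1 \leq \|x_0\| < 1+\delta$ (available from the very definition of the quotient norm) and normalize to $\hat x_0 := x_0/\|x_0\| \in S_X$. A direct calculation gives $\|T(\hat x_0)\| = \|\bar T(\bar x_0)\|/\|x_0\|$, which can be forced arbitrarily close to $1$ by choosing $\delta$ small and strengthening the starting hypothesis on $\|\bar T(\bar x_0)\|$. Applying property (P2) of $(X,Y)$ to $T$ and $\hat x_0$ then produces $\tilde x_1 \in S_X$ with $\|T(\tilde x_1)\| = 1$ and $\|\tilde x_1 - \hat x_0\|$ as small as we wish.

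Setting $\bar x_1 := q(\tilde x_1) \in X/X_0$, the isometry of $j$ yields $\|\bar T(\bar x_1)\| = \|T(\tilde x_1)\| = 1$, which combined with $\|\bar x_1\| \leq \|\tilde x_1\|=1$ and $\|\bar T\|=1$ forces $\bar x_1 \in S_{X/X_0}$. Since $q(\hat x_0) = \bar x_0/\|x_0\|$, the triangle inequality gives
\[
\|\bar x_1 - \bar x_0\| \leq \|q(\tilde x_1 - \hat x_0)\| + \left(1 - \frac{1}{\|x_0\|}\right) \leq \|\tilde x_1 - \hat x_0\| + \delta,
\]
so a choice such as $\delta := \min\{\eta(\eps/2)/4,\ \eps/2\}$ (with $\eta(\cdot)$ denoting the (P2) modulus of $(X,Y)$) together with a suitable $\bar\eta(\eps)$ — for instance $\bar\eta(\eps) := \eta(\eps/2)/4$ — produces the required modulus for $(X/X_0, Y_0)$.

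I expect no real obstacle here; this is a routine reduction via composition with a quotient and a subspace. The only mild technicality is that a finite-codimensional subspace need not be proximinal, so a lift $x_0$ of norm exactly $1$ need not exist — one must settle for an almost-isometric lift and normalize, carrying the small perturbation $\delta$ through to the final estimate.
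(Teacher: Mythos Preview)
Your proposal is correct and follows essentially the same route as the paper's proof: compose $\bar T$ with the quotient map (and the inclusion into $Y$), take an almost-isometric lift of $\bar x_0$ and normalize, apply (P2) for $(X,Y)$, and push the witness back down via $q$. The paper phrases the approximate lift as a sequence $\{x_n\}\subset X$ with $[x_n]=[x_0]$ and $\|x_n\|\to 1$ rather than a single $\delta$-lift, but this is purely cosmetic.
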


\begin{proof} Let $\eps > 0$ be given and assume that the pair $(X, Y)$ has property (P2) with some function $\bar\eta(\eps) > 0$. Let $\widetilde{T} : X/X_0 \longrightarrow Y_0$ with $\|\widetilde{T}\| = 1$ and $[x_0] \in S_{X/X_0}$ be such that
	\begin{equation*}
		\|\widetilde{T}([x_0])\| > 1 - \bar\eta(\eps/2).	
	\end{equation*}
	Pick a sequence $\{x_n\}_{n\in \N} \subset X$ with $\|x_n\| \longrightarrow 1$ and $[x_n] = [x_0]$ for every $n \in \N$. Consider the quotient mapping $Q: X \longrightarrow X/X_0$, define the operator $T:= \widetilde{T} \circ Q$, and observe that $\|T\|=1$ as $Q$ is a quotient map. Then
	\begin{equation*}
		\|T(x_n)\| = \|\widetilde{T}(Q(x_n))\| = \|\widetilde{T}([x_0])\| > 1 - \bar\eta(\eps/2).
	\end{equation*}	
	Therefore, we may find $n\in \N$ such that
	\begin{equation*}
		\left\| T\left(\frac{x_n}{\|x_n\|}  \right) \right\| > 1 - \bar\eta(\eps/2) \qquad \text{and} \qquad \bigl|1-\|x_n\|\bigr|<\eps/2.
	\end{equation*}
	The hypothesis provides us with $y_0 \in S_X$ such that
	\begin{equation*}
		\|T(y_0)\| = 1 \qquad \text{and} \qquad \left\| y_0 - \frac{x_n}{\|x_n\|} \right\| < \eps/2.
	\end{equation*}	
	Then, $\|x_n - y_0\| < \eps$ and so
	\begin{equation*}
		\|[x_0] - Q(y_0)\| \leq \|x_n - y_0\| < \eps.	
	\end{equation*}
	On the other hand,
	\begin{equation*}
		1 = \|T(y_0)\| = \| \widetilde{T}(Q(y_0))\| \leq \|Q(y_0)\| \leq 1	
	\end{equation*}
	which implies that $\|Q(y_0)\| = 1 = \|\widetilde{T}(Q(y_0))\|$.
\end{proof}

Therefore, the proof of Theorem \ref{theorem:dualpropertyisnotpossible} finishes if we are able to prove it for two-dimensional spaces $X$ and $Y$. This is what we will do in Proposition \ref{BPBpp:dual3}, but we need some preliminary work.

Let $X$ be a $2$-dimensional real Banach space. We assume that $X = \R^2$ and we consider the standard unit basis vectors $e_1 = (1, 0)$ and $e_2 = (0, 1)$ of $X$. The unit sphere $S_X$ of $X$ can be represented by the continuous curve $\gamma$ defined as follows:
\begin{equation*}
	\gamma (\theta) := \frac{\cos \theta e_1 + \sin \theta e_2}{\| \cos \theta e_1 + \sin \theta e_2\|} \qquad (\theta \in [0,2\pi]).	
\end{equation*}
Given a point $x = \gamma(\theta_0) \in S_X$ for some $\theta_0 \in \R$, we call the curve $\gamma_x$ defined by
\begin{equation*}
	\gamma_x(\theta) := \gamma (\theta + \theta_0) \qquad (0 \leq \theta \leq \pi)
\end{equation*}
as the \emph{half arc starting at} $x$. For $x^*\in S_{X^*}$, we define $F(x^*)$ to be the \emph{face} $F(x^*):=\{x\in S_X\colon x^*(x)=1\}$. We note that for a given $x^* \in S_{X^*}$, if $\gamma(\theta_1)$ and $\gamma(\theta_2)$ with $0 \leq \theta_2 - \theta_1 \leq \pi$ are in the face $F(x^*)$, then $\gamma(\theta) \in F(x^*)$ for all $\theta_1 \leq \theta \leq \theta_2$. Indeed, the line segment $[0, \gamma(\theta)]$ from $0$ to $\gamma(\theta)$ intersects the line segment $[\gamma(\theta_1), \gamma(\theta_2)]$ from $\gamma(\theta_1)$ to $\gamma(\theta_2)$ whenever $\theta_1 \leq \theta \leq \theta_2$ with $0 \leq \theta_2 - \theta_1 \leq \pi$ (see Figure \ref{fig1}). We will use this observation in the following result. More in general, we have for any $x^* \in X^*$ that if $x^*(\gamma(\theta_1)) \geq 1$ and $x^*(\gamma(\theta_2)) \geq 1$, then $x^*(\gamma(\theta)) \geq 1$ for all $\theta_1 \leq \theta \leq \theta_2$.

\begin{figure}[h]
	\includegraphics[width=10cm]{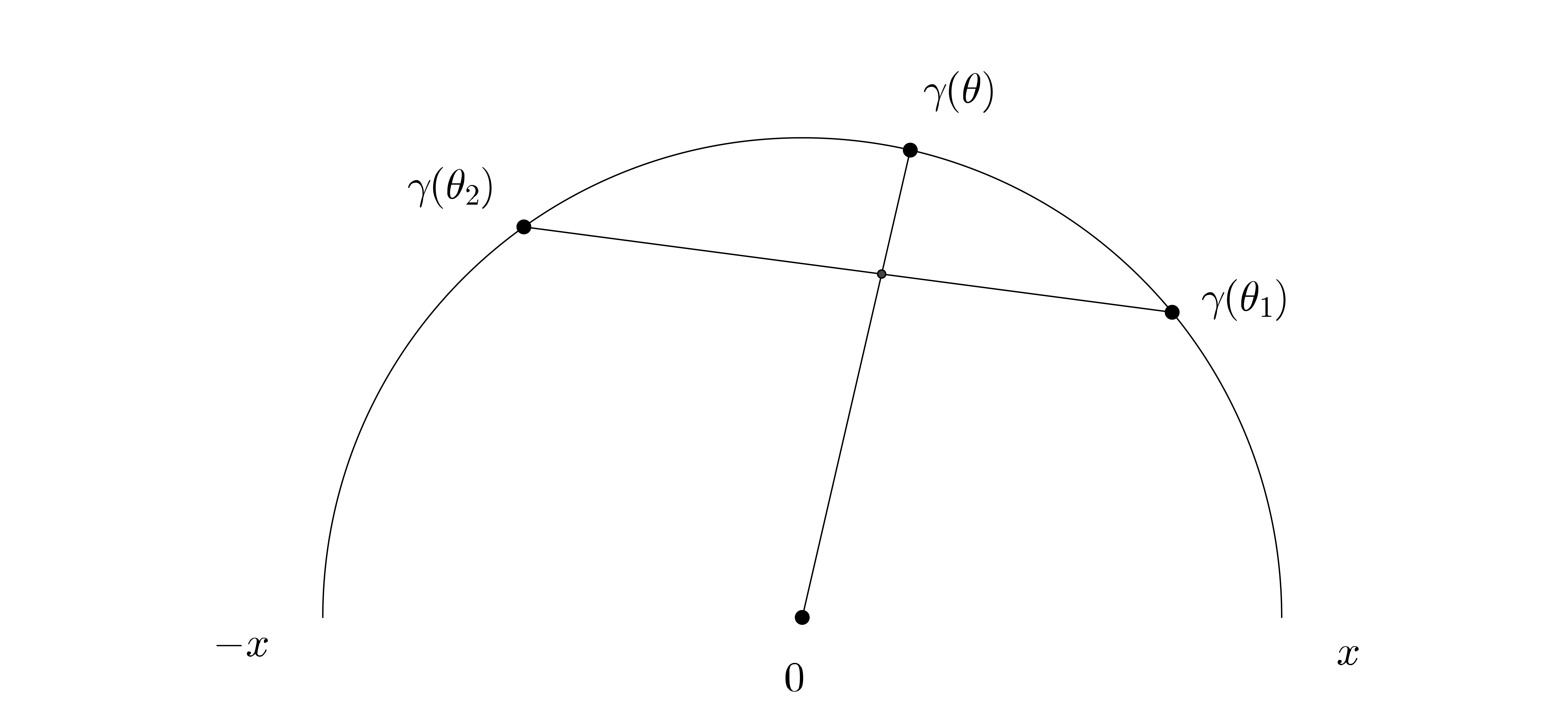}
	\centering
	\caption{The half arc starting at $x$}	
	\label{fig1}
\end{figure}

\begin{proposition} \label{BPBpp:dual1} Let $X$ and $Y$ be two-dimensional real Banach spaces and consider $T \in \mathcal{L}(X, Y)$ with $\|T\| = 1$. Let $\gamma$ be the half arc starting at $x$. Suppose that for $0 \leq \theta \leq \pi$, the image $T(\gamma(\theta))$ intersects the unit sphere $S_Y$ in three points at $0, \theta_c$ and $\pi$ for some $\theta_c$. Also, suppose that there are $\theta_1, \theta_2$ with $0 \leq \theta_1 \leq \theta_c$ and $\theta_c \leq \theta_2 \leq \pi$ such that $T(\gamma(\theta_1))$ and $T(\gamma(\theta_2))$ are in the interior of $B_Y$ (see Figure \ref{fig2}). Then $T(\gamma(\theta_c))$ does not belong to $F(y^*) \cup F(-y^*)$ for any $y^* \in S_{Y^*}$ with $y^*(T(x)) = 1$.	
\end{proposition}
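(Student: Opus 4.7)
The plan is to argue by contradiction, relying entirely on the observation stated just before the proposition: if $x^* \in X^*$ satisfies $x^*(\gamma(\theta_1)) \geq 1$ and $x^*(\gamma(\theta_2)) \geq 1$ with $0 \leq \theta_2 - \theta_1 \leq \pi$, then $x^*(\gamma(\theta)) \geq 1$ for every $\theta \in [\theta_1, \theta_2]$. Since in our setting the relevant functionals will have norm at most $1$, these inequalities will automatically be equalities.

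First I would treat the case $T(\gamma(\theta_c)) \in F(y^*)$. Consider the composition $x^* := y^* \circ T \in X^*$; since $\|y^*\| = \|T\| = 1$, one has $\|x^*\| \leq 1$. By hypothesis $x^*(\gamma(0)) = y^*(T(x)) = 1$ and $x^*(\gamma(\theta_c)) = y^*(T(\gamma(\theta_c))) = 1$, and the interval $[0, \theta_c]$ has length at most $\pi$. The observation then yields $x^*(\gamma(\theta)) \geq 1$ for all $\theta \in [0, \theta_c]$, and combined with $\|x^*\| \leq 1$ this forces the equality $x^*(\gamma(\theta)) = 1$ on the whole interval. In particular, taking $\theta = \theta_1$, we get $y^*(T(\gamma(\theta_1))) = 1$, which gives $\|T(\gamma(\theta_1))\| \geq 1$ and contradicts the assumption that $T(\gamma(\theta_1))$ lies in the interior of $B_Y$.

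The case $T(\gamma(\theta_c)) \in F(-y^*)$ is fully symmetric and handled on the right half of the arc. The defining normalization of $\gamma$ gives $\gamma(\pi) = -x$, so $(-y^*)(T(\gamma(\pi))) = y^*(T(x)) = 1$, while the assumption gives $(-y^*)(T(\gamma(\theta_c))) = 1$. Applying the observation to $-y^* \circ T$ on the interval $[\theta_c, \pi]$, whose length is again at most $\pi$, and using $\|-y^* \circ T\| \leq 1$, one obtains $(-y^*)(T(\gamma(\theta))) = 1$ for all $\theta \in [\theta_c, \pi]$. Specialising to $\theta_2$ produces the same contradiction with $T(\gamma(\theta_2))$ being in the interior of $B_Y$.

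I do not expect any substantive obstacle: the argument is a direct, two-case application of the observation stated before the proposition, together with the trivial norm bound $\|y^* \circ T\| \leq 1$. The only points to check are that both subintervals $[0, \theta_c]$ and $[\theta_c, \pi]$ have length in $[0, \pi]$ so that the observation applies (immediate from $0 \leq \theta_c \leq \pi$), and that $\gamma(\pi) = -\gamma(0)$ so the symmetric case produces $(-y^*)$ paired with the endpoint $-x$ correctly.
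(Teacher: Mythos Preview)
Your argument is correct and is essentially the paper's own proof: both use the functional $T^*y^* = y^*\circ T$ together with the observation preceding the proposition to force $y^*(T(\gamma(\theta)))=1$ on the appropriate subinterval and reach the same contradiction with $\theta_1$ (resp.\ $\theta_2$). The only cosmetic difference is that you invoke the ``$\geq 1$'' form of the observation with $\|x^*\|\leq 1$, while the paper first notes $T^*y^*\in S_{X^*}$ and uses the face version; you are also a bit more explicit in the $F(-y^*)$ case via $\gamma(\pi)=-x$, which the paper leaves implicit.
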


\begin{figure}[h]
	\includegraphics[width=14cm]{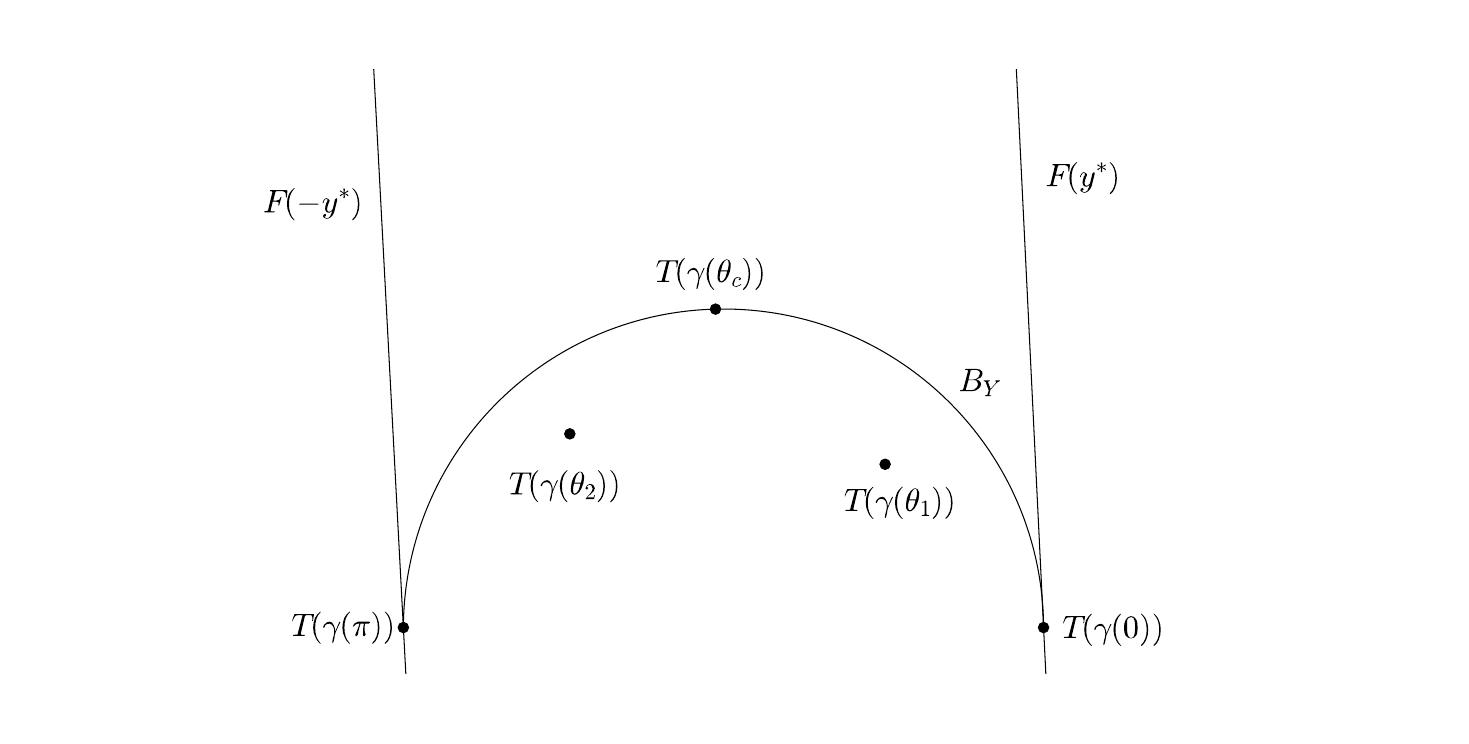}
	\centering
	\caption{ }	
	\label{fig2}
\end{figure}

\begin{proof} Suppose that there exists some $y^* \in S_{Y^*}$ such that $y^*(T(x)) = 1$ and $y^*(T(\gamma(\theta_c))) = 1$. Then
	\begin{equation*}
		[T^*y^*](x) = 1 = [T^*y^*](\gamma(\theta_c)).	
	\end{equation*}
	Note that $T^*y^* \in S_{X^*}$. So the points $x = \gamma(0)$ and $\gamma (\theta_c)$ are both in the face $F(T^*y^*)$. By the observation just before this proposition, we get that $\gamma(\theta) \in F(T^* y^*)$ for all $0 \leq \theta \leq \theta_c$. This implies that
	\begin{equation*}
		1 = [T^* y^*](\gamma(\theta)) = y^*(T(\gamma(\theta))) \leq \|T(\gamma(\theta))\| \leq 1	
	\end{equation*}	
	for all $0 \leq \theta \leq \theta_c$. This shows that $\|T(\gamma(\theta))\| = 1$ for all $0 \leq \theta \leq \theta_c$ which contradicts the hypothesis on $\theta_1$. If we have $T(\gamma(\theta_c)) \in F(-y^*)$, then we can use the same arguments as before to get a contradiction with the hypothesis on $\theta_2$.	
\end{proof}

The most intriguing part of the proof of Theorem \ref{theorem:dualpropertyisnotpossible} for a pair of two-dimensional real spaces is contained in the following proposition which may have its own interest.

\begin{proposition} \label{BPBpp:dual2} Let $X$ and $Y$ be $2$-dimensional  real Banach spaces. Then there exists $T \in \mathcal{L}(X, Y)$ with $\|T\| = 1$ such that
	\begin{itemize}
		\item[(i)] $T(B_X) \subset B_Y$ and
		\item[(ii)] $T(B_X) \cap S_Y$ contains two points $y_1$ and $y_2$ such that for some $y_1^* \in S_{Y^*}$ with $y_1^*(y_1) = 1$ we have $\dist (y_2, F(y_1^*) \cup F(-y_1^*)) > 0$.
	\end{itemize}	
\end{proposition}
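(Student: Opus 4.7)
My plan is to construct $T$ via a John-type extremization on the $4$-dimensional space $\mathcal{L}(X,Y)$. Consider the compact convex set $\mathcal{K}=\{L\in\mathcal{L}(X,Y):\|L\|\leq 1\}$, which contains invertible operators (any isomorphism can be scaled into $\mathcal{K}$). The continuous function $L\mapsto|\det L|$ therefore attains a positive maximum at some $T\in\mathcal{K}$. If $\|T\|<1$, then $T/\|T\|$ lies in $\mathcal{K}$ and has strictly larger $|\det|$, contradicting maximality; so $\|T\|=1$, giving $T(B_X)\subseteq B_Y$ with nonempty contact set $C:=T(B_X)\cap S_Y$.

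I would then apply a Karush--Kuhn--Tucker (KKT) first-order optimality argument at $T$. The gradient of $|\det L|$ at $T$ equals $|\det T|\,(T^{-1})^\top$, which has rank two. The subdifferential of the operator norm at $T$ is the closed convex hull of rank-one tensors $y^*\otimes x$, where $(x,y^*)$ ranges over active pairs in $S_X\times S_{Y^*}$ with $y^*(Tx)=1$ (equivalently, $Tx\in C$ and $y^*$ supports $B_Y$ at $Tx$). For the rank-two gradient to lie in a positive multiple of the convex hull of such rank-one tensors, both the $x$-components and the $y^*$-components appearing in a representation must span $X$ and $Y^*$, respectively. In particular, $C$ contains two linearly independent points $y_1,y_2$, and so $y_1\neq\pm y_2$.

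It remains to exhibit $y_1^*\in S_{Y^*}$ with $y_1^*(y_1)=1$ and $|y_1^*(y_2)|<1$. If every $y^*$ in the set $K_{y_1}$ of supporting functionals at $y_1$ satisfied $|y^*(y_2)|=1$, then by connectedness of $K_{y_1}$ and continuity of $y^*\mapsto y^*(y_2)$, this value would be constantly $+1$ or constantly $-1$ on $K_{y_1}$; combined with $y^*(y_1)=1$ on $K_{y_1}$, if $K_{y_1}$ is a non-degenerate arc (i.e., $y_1$ is a vertex of $B_Y$), two linearly independent elements of $K_{y_1}$ would pin down $y_2=\pm y_1$, contradicting linear independence. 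If instead $K_{y_1}=\{y_1^*\}$ is a singleton (so $y_1$ lies in the relative interior of a face of $B_Y$), one exploits the spanning property of active $y^*$-components from the KKT step: either some other contact point in $C$ lies off $F(y_1^*)\cup F(-y_1^*)$ directly and serves as $y_2$, or the face endpoints must be vertices in $C$, and swapping the role of $y_1$ to such a vertex reduces to the preceding vertex case. Closedness of $F(y_1^*)\cup F(-y_1^*)$ then upgrades $y_2\notin F(y_1^*)\cup F(-y_1^*)$ to $\dist(y_2,F(y_1^*)\cup F(-y_1^*))>0$.

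The main obstacle is the final case analysis: handling contact sets concentrated on a single face of $B_Y$ and its antipode, which requires invoking vertex geometry at face endpoints. Making the KKT/subdifferential step fully rigorous --- given that the operator-norm constraint is parameterized by the infinite set $S_X\times S_{Y^*}$ --- is the technical heart of the argument, and is likely the motivation for the geometric machinery the authors already developed in Proposition~\ref{BPBpp:dual1}.
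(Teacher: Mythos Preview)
Your approach is correct and genuinely different from the paper's. The paper splits into two cases. When $X$ is Hilbert, it invokes John's maximal-volume ellipsoid directly to obtain two linearly independent contact points and uses strict convexity of the ellipse to separate $y_2$ from any face through $y_1$. When $X$ is not Hilbert, it appeals to the Day--Nordlander theorem to produce $x_1,x_2\in S_X$ and $y_1,y_2\in S_Y$ with $\|x_1\pm x_2\|<\|y_1\pm y_2\|$, defines an initial operator $S$ by $S(x_i)=y_i$, and then deforms $S$ through scalings and a one-parameter family of rank-one perturbations $P_\lambda=\lambda\,\Id_Y+(1-\lambda)P$, using Proposition~\ref{BPBpp:dual1} together with the intermediate value theorem to pin down the right $\lambda$. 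Your max-$|\det|$ construction handles both cases at once as a John-type extremal problem on $\mathcal{L}(X,Y)$: the KKT step is rigorous here, since in finite dimensions the face of the trace-class unit ball exposed by $T$ is the convex hull of the active rank-one tensors $y^*\otimes x$ and Carath\'eodory gives a finite decomposition; this replaces the Day--Nordlander machinery entirely, and Proposition~\ref{BPBpp:dual1} is never needed. The paper's route yields a more explicit, hands-on construction; yours is conceptually shorter and does not bifurcate on whether $X$ is Hilbert.

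Two minor points on your case analysis. First, $K_{y_1}$ being a singleton does not force $y_1$ into the relative interior of a nondegenerate face: $y_1$ could sit on a strictly convex arc of $S_Y$, in which case $F(y_1^*)=\{y_1\}$ and any linearly independent contact point already works. Second, your ``either/or'' in the smooth case is indeed correct, but it is worth spelling out why: if every contact point lies in $F(y_1^*)\cup F(-y_1^*)$, then the $y^*$-spanning from KKT forces some active $y_i^*\neq\pm y_1^*$; its contact point $Tx_i\in F(\pm y_1^*)$ then carries two distinct supporting functionals ($\pm y_1^*$ and $y_i^*$), hence is a genuine vertex of $B_Y$ and an endpoint of that face, so the reduction to the vertex case is legitimate.
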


\begin{proof} We divide the proof in two cases.

\noindent	{\it Case 1:} we assume that $X$ is a Hilbert space. Since $Y$ is finite-dimensional, by using John's theorem (see \cite[Corollary 15.2, p. 121]{T} for example), there is a unique ellipsoid $\mathcal{E}$ of maximal volume such that $\mathcal{E} \subset B_Y$. Since $X$ is a Hilbert space, there is $T \in \mathcal{L}(X, Y)$ with $\|T\| = 1$ such that $T(B_X) = \mathcal{E} \subset B_Y$. Now using \cite[Theorem 15.3]{T}, since $Y$ is $2$-dimensional, there are at least two linearly independent points $y_1, y_2 \in T(B_X) \cap S_Y$. Let $y_1^* \in S_{Y^*}$ be such that $y_1^*(y_1) = 1$. Since the boundary of $\mathcal{E}$ does not contain line segments, we get that $y_2 \not\in F(y_1^*) \cup F(y_2^*)$.

	\begin{figure}[h]
		\includegraphics[width=17cm]{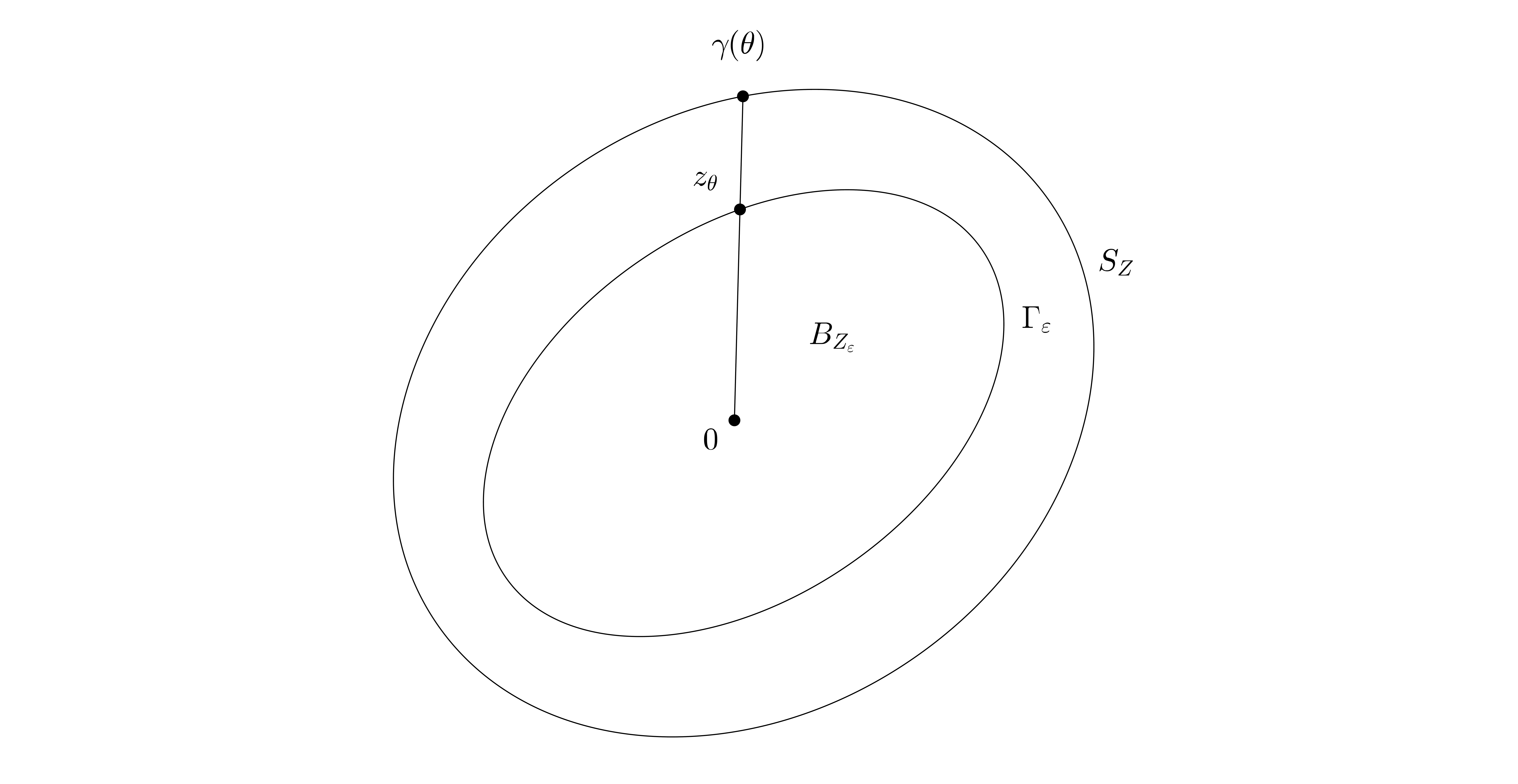}
		\centering
		\caption{ }	
		\label{fig5}
	\end{figure}

	Before consider Case 2 in which $X$ is not a Hilbert space, we review the proof of \cite[Theorem]{N}. Let $Z$ be any $2$-dimensional Banach space and let $\gamma$ be a parametrization of $S_Z$. If two unit vectors $z_1$ and $z_2$ are rotated around $S_Z$ while their difference $z_1 - z_2$ has constantly norm equal to $\eps$, the vector $\frac{1}{2} (z_1 + z_2)$ describes a curve $\Gamma_{\eps}$. Let $r(\theta) = \| \gamma(\theta)\|_2$ where $\| \cdot \|_2$ denotes the Euclidean norm in $\R^2$. Let $z_{\theta}$ be the point where the segment $[0, \gamma(\theta)]$ intersects $\Gamma_{\eps}$ (see Figure \ref{fig5}) and let $\Delta(\eps, \theta) = 1 - \|z_{\theta}\| = \| \gamma(\theta) - z_{\theta}\|$. So $\|z_{\theta}\| = 1 - \Delta(\eps, \theta)$ and $z_{\theta} = \|z_{\theta}\| \gamma(\theta)$. So
	\begin{equation*}
		\|z_{\theta}\|_2 = \|z_{\theta}\| \|\gamma(\theta)\|_2 = (1 - \Delta(\eps, \theta))\|\gamma(\theta)\|_2 = (1 - \Delta(\eps, \theta))r(\theta).	
	\end{equation*}
	Using this and denoting $B_{Z_{\eps}}$  the region inside $\Gamma_{\eps}$, we have that
	\begin{equation*}
		\Area (B_{Z_{\eps}}) = \frac{1}{2} \int_0^{2 \pi} \|z_{\theta}\|_2^2 d \theta = \frac{1}{2} \int_0^{2\pi} (1 - \Delta(\eps, \theta))^2 r(\theta)^2 d \theta.
	\end{equation*}
	Also,
	\begin{equation*}
		\Area(B_Z) = \frac{1}{2} \int_0^{2\pi} \|\gamma(\theta)\|_2^2 d \theta = \frac{1}{2} \int_{0}^{2\pi} r(\theta)^2 d \theta.	
	\end{equation*}
	On the other hand, \cite[Lemma]{N} says that
	\begin{equation*}
		\Area (B_{Z_{\eps}}) = \left(1 - \frac{\eps^2}{4} \right) \Area (B_Z).	
	\end{equation*}
	And then
	\begin{equation} \label{nord1}
		\int_0^{2 \pi} \left[ \left( 1 - \Delta(\eps, \theta) \right)^2 - \left(1 - \frac{\eps^2}{4}\right)  \right]r(\theta)^2 d \theta = 0.
	\end{equation}

\noindent	{\it Case 2:} Now we assume that $X$ is not a Hilbert space. By the Day-Nordlander theorems (see \cite[p. 60]{Diestel} or \cite[Theorem 4.1]{Day} and \cite[Theorem]{N}, respectively), there is some $\eps > 0$ such that $\delta_X(\eps)$ is strictly less than the modulus of convexity of a Hilbert space $\delta_H(\eps) = 1 - \sqrt{1 - \frac{\eps^2}{4}}$.
	So by (\ref{nord1}), there is $\theta_0$ such that
	\begin{equation*}
		(1 - \Delta(\eps, \theta_0))^2 - \left(1 - \frac{\eps^2}{4} \right) < 0.	
	\end{equation*}
	as well as $\theta_1$ such that
	\begin{equation*}
		(1 - \Delta(\eps, \theta_1))^2 - \left(1 - \frac{\eps^2}{4} \right) > 0.	
	\end{equation*}
	It means that there are $x_1, x_2 \in S_X$ such that  $\|x_1 - x_2\| = \eps$ and $\|x_1 + x_2\| < \sqrt{4 - \eps^2}$. By moving one of the points $x_1$ or $x_2$ on $S_X$ a little, we may assume that those points satisfy $\|x_1 + x_2\| < \sqrt{4 - \eps^2}$ and $\|x_1 - x_2\| < \eps$.

	Now for the Banach space $Y$, using the continuity of $\Delta$, we can find $\theta_2$ such that
	\begin{equation*}
		(1 - \Delta(\eps, \theta_2))^2 - \left( 1 - \frac{\eps^2}{4} \right) = 0.	
	\end{equation*}
	So there are $y_1, y_2 \in S_Y$ such that $\|y_1 - y_2\| = \eps$ and $\|y_1 + y_2\| = \sqrt{4 - \eps^2}$.

	Define the operator $S: X \longrightarrow Y$ to be such that
	\begin{equation*}
		S(x_1) = y_1 \qquad \text{and} \qquad S(x_2) = y_2.	
	\end{equation*}
	So $\|S(x_1)\| = \|S(x_2)\| = 1$, $S(x_1 - x_2) = y_1 - y_2$ and $S(x_1 + x_2) = y_1 + y_2$. Moreover,
	\begin{equation*}
		\left\| S \left( \frac{x_1 - x_2}{\|x_1 - x_2\|}   \right)	\right\| = \frac{\|y_1 - y_2\|}{\|x_1 - x_2\|} > 1 \quad \text{and} \quad \left\| S \left( \frac{x_1 + x_2}{\|x_1 + x_2\|}   \right)	\right\| = \frac{\|y_1 + y_2\|}{\|x_1 + x_2\|} > 1
	\end{equation*}
	Multiplying the operator $S$ by $1 - \delta$ for some small $\delta > 0$, we may assume that
	\begin{equation*}
		\|S(x_1)\| < 1, \quad \|S(x_2)\| < 1, \quad \left\| S \left( \frac{x_1 - x_2}{\|x_1 - x_2\|}   \right)	\right\| > 1 \quad \text{and}\quad  \left\| S \left( \frac{x_1 + x_2}{\|x_1 + x_2\|}   \right)	\right\| > 1.
	\end{equation*}
	Consider $\gamma_1$ to be the half arc starting at $\frac{x_1 + x_2}{\|x_1 + x_2\|}$ (see Figure \ref{fig3}). Then there are $0 \leq t_1 < t_2 < t_3 \leq \pi$ such that
	\begin{equation*}
		\|S(\gamma_1(t_1))\| < 1, \qquad \|S(\gamma_1(t_2))\| > 1 \qquad \text{and} \qquad \|S(\gamma_1(t_3))\| < 1.	
	\end{equation*}
	\begin{figure}[t]	
		\includegraphics[width=14cm]{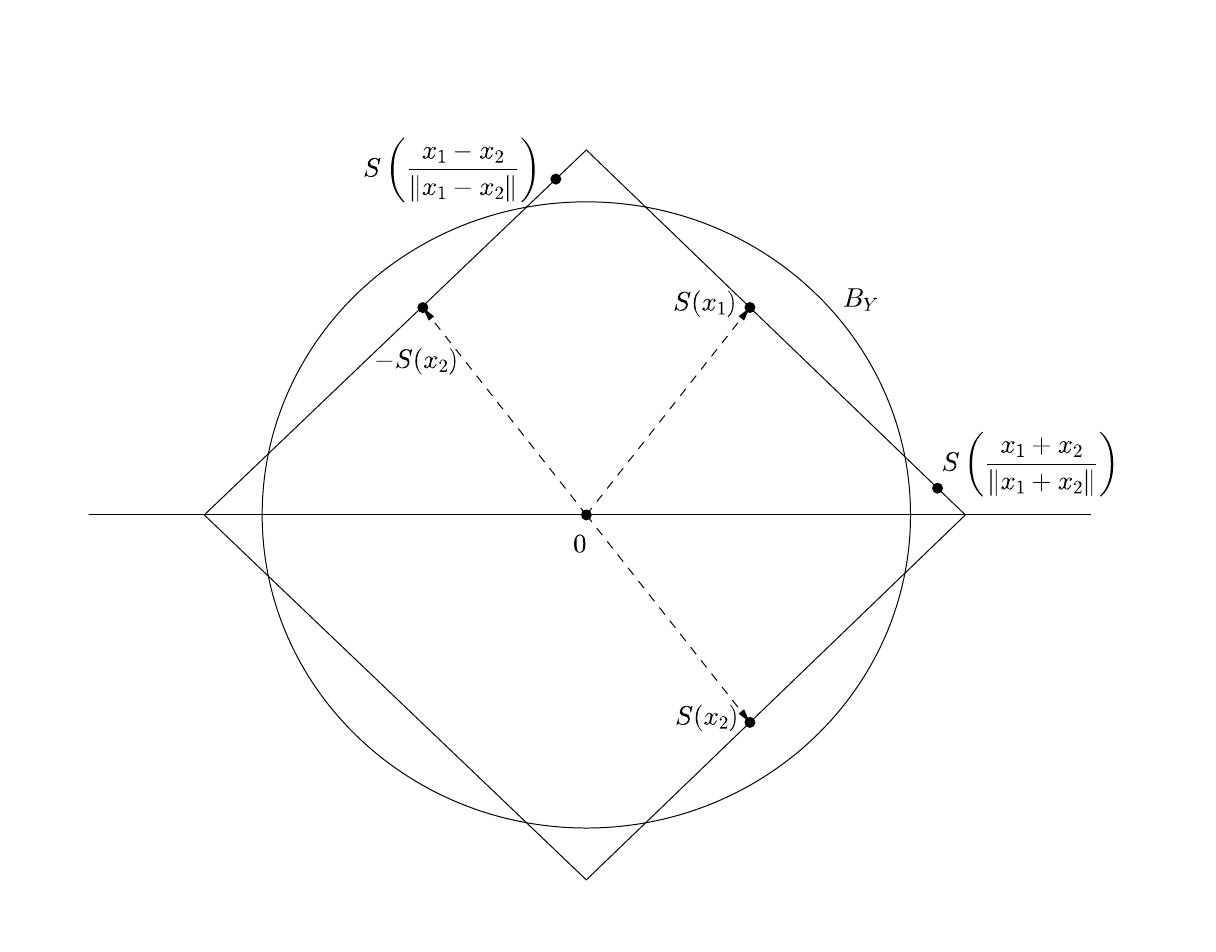}
		\centering
		\caption{ }	
		\label{fig3}
	\end{figure}
	Let
	\begin{equation*}
		a := \max \bigl\{\|S(\gamma_1(t))\|\colon 0 \leq t \leq t_1, \ t_3 \leq t \leq \pi \bigr\} \quad \text{and} \quad b := \max\bigl\{\|S(\gamma_1(t))\|\colon \ t_1 \leq t \leq t_3 \bigr\}.	
	\end{equation*}
	
	We may assume that $a\leq b$. Otherwise, we consider the half arc $\gamma_2$ starting at $\frac{x_1 - x_2}{\|x_1 - x_2\|}$ instead of $\gamma_1$. Now we consider two cases.
	
\noindent	{\it Subcase 1:} We assume that $a = b$ and we consider the operator $T := \frac{1}{a}S \in S_{\mathcal{L}(X, Y)}$. So
	\begin{equation*}
		\|T(\gamma_1(t_1))\| = \frac{1}{a}\|S(\gamma_1(t_1))\| \leq \|S(\gamma_1(t_1))\| < 1.	
	\end{equation*}
	Analogously, $\|T(\gamma_1(t_3))\| < 1$. Also, by the definition of $a$ and $b$, there are $s_1, s_2$ such that $s_1 \leq t_1 < s_2 < t_3 \leq s_1 + \pi$ such that
	\begin{equation*}
		\|T(\gamma_1(s_1))\| = \|T(\gamma_1(s_1+ \pi))\| = \|T(\gamma_1(s_2))\| = 1.
	\end{equation*}
	Let $y_1^* \in S_{Y^*}$ be such that $y^*(T(\gamma_1(s_1))) = 1$. Define $y_1 := T(\gamma_1(s_1))$ and $y_2 := T(\gamma_1(s_2))$. So $y_1^*(y_1) = 1$ and by Proposition \ref{BPBpp:dual1}, $\dist\left( y_2, F(y_1^*) \cup F(-y_1^*) \right) > 0$.

\noindent	{\it Subcase 2:} Now we assume that $a < b$. Let $s \in [0, t_1] \cup [t_3, \pi]$ be such that $a = \|S(\gamma_1(s))\|$. Define the operator $T_1 := \frac{1}{a} S$. Then $\|T_1(\gamma_1(s))\| = \frac{1}{a}\|S(\gamma_1(s))\| = 1$. Let $\gamma_2$ be the half arc starting at $\gamma_1(s)$. So (see Figure \ref{fig4}) there are $0 < s_1 < s_2 < s_3 < \pi$ such that
	\begin{equation*}
		\|T_1(\gamma_2(0))\| = \|T_1(\gamma_2(\pi))\| = 1,\quad \|T_1(\gamma_2(t))\| \leq 1 \qquad \text{for} \ t \in [0, s_1] \cup [s_3, \pi]
	\end{equation*}
	as well as
	\begin{equation*}
		\|T_1(\gamma_2(s_1))\| <1, \quad \|T_1(\gamma_2(s_3))\| < 1 \quad \text{and} \quad \|T_1(\gamma_2(s_2))\| > 1.	
	\end{equation*}
	\begin{figure}[h]
		\includegraphics[width=17cm]{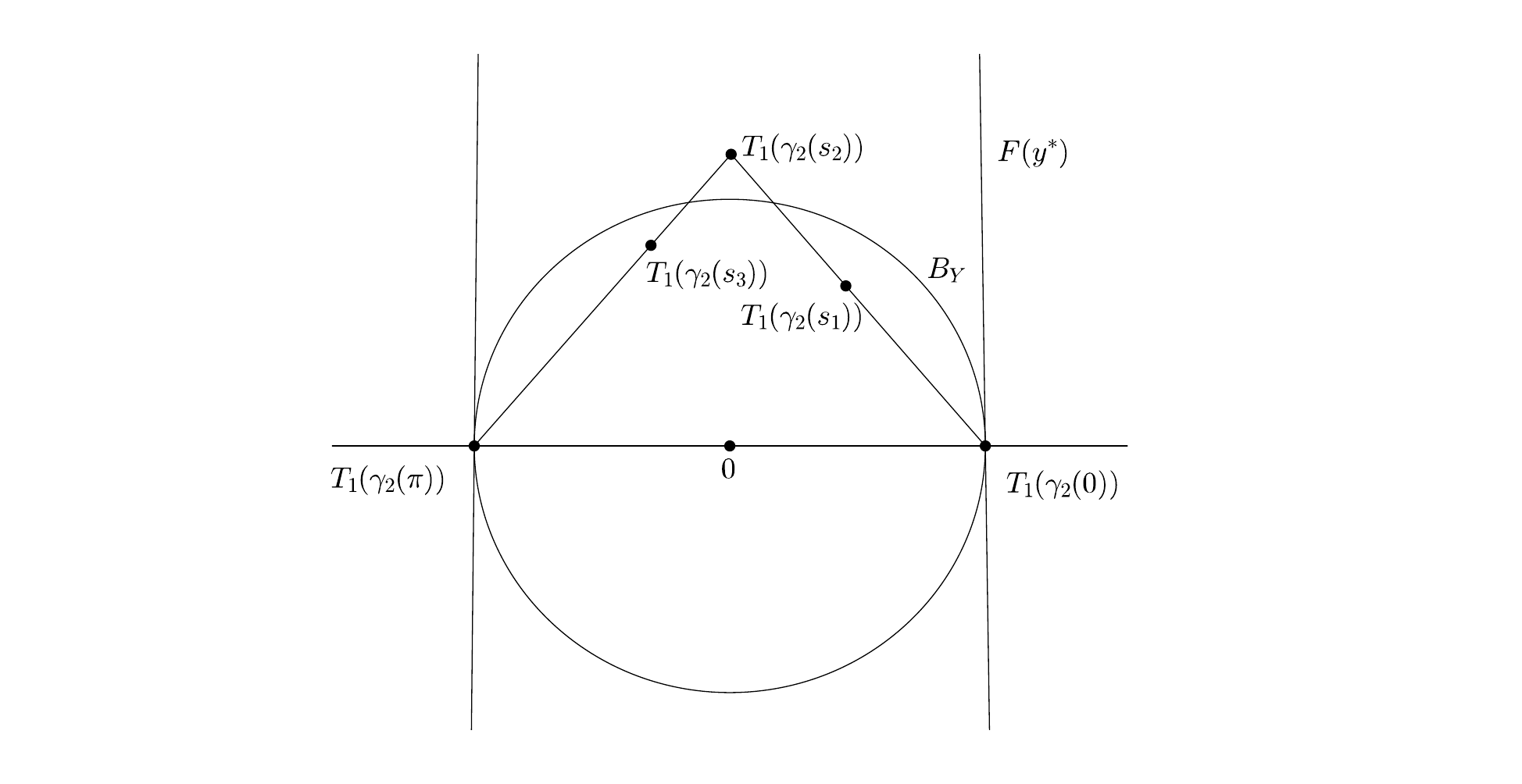}
		\centering
		\caption{ }	
		\label{fig4}
	\end{figure}
	
	Let $y^* \in S_{Y^*}$ be such that $y^*(T_1(\gamma_2(0))) = 1$ and define $P: Y \longrightarrow Y$ by
	\begin{equation*}
		P(y) := y^*(y)T_1(\gamma_2(0)) \qquad (y \in Y).
	\end{equation*}
	Note that $P$ is a projection with $\|P\| = 1$. For all $\lambda \in [0, 1]$, we define $P_{\lambda}: Y \longrightarrow Y$ by
	\begin{equation*}
		P_{\lambda} := \lambda \Id_Y + (1 - \lambda)P.	
	\end{equation*}
	So $\|P_{\lambda}\| \leq 1$. Let $T_{\lambda} := P_{\lambda}T_1 \in \mathcal{L}(X, Y)$ and define $\varphi: [0, 1] \longrightarrow \R$ by
	\begin{equation*}
		\varphi(\lambda) := \max \bigl\{\|P_{\lambda} T_1(\gamma_2(t))\|\colon \ s_1 \leq t \leq s_3 \bigr\} \qquad (\lambda \in [0, 1]).
	\end{equation*}
	Then $\varphi$ is continuous,
	\begin{equation*}
		\varphi(0) = \max \bigl\{|y^*(T_1(\gamma_2(t)))|\colon \ s_1 \leq t \leq s_3 \bigr\} \qquad \text{and} \qquad \varphi(1) = \max \bigl\{\|T_1(\gamma_2(t))\|\colon \ s_1 \leq t \leq s_3 \bigr\}.
	\end{equation*}
	We note that $|y^*(T_1(\gamma_2(t)))| < 1$ for all $s_1 \leq t \leq s_3$. Indeed, otherwise there is some $s_1 < \widetilde{t} < s_3$ such that $y^*(T_1(\gamma_2(\widetilde{t}))) = 1$ or $-1$. We assume that $y^*(T_1(\gamma_2(\widetilde{t}))) = 1$, so
	\begin{equation*}
		[T_1^*y^*](\gamma_2(\widetilde{t})) = 1 = [T_1^*y^*](\gamma_2(0)).	
	\end{equation*}
	Hence $[T_1^*y^*](\gamma_2(t)) \geq 1$ for all $0 \leq t \leq \widetilde{t}$ and this is a contradiction with the fact that $$y^*(T_1(\gamma_2(s_1))) \leq \|T_1(\gamma_2(s_1))\| < 1.$$ Therefore, $\varphi(0) < 1$. Since $\varphi(1) \geq \|T_1(\gamma_2(s_2))\| > 1$, there exists $\lambda_0 \in (0, 1)$ such that $\varphi(\lambda_0) = 1$. Consider $T := T_{\lambda_0} = P_{\lambda_0} T_1 \in \mathcal{L}(X, Y)$. Then $\|T\| \leq 1$. Also,
	\begin{equation*}
		\|T(\gamma_2(0))\| = \|T(\gamma_2(\pi))\| = 1, \quad \|T(\gamma_2(s_1))\| < 1 \quad \text{and} \quad \|T(\gamma_2(s_3))\| < 1.	
	\end{equation*}
	Also, by the definition of $\varphi(\lambda_0)$, there is $\widetilde{s}_2 \in [s_1, s_3]$ such that $\|T(\gamma_2(\widetilde{s}_2))\| = 1$. So taking $y_1 := T(\gamma_2(0))$, $y_2 := T(\gamma_2(\widetilde{s}_2))$ and $y_1^* \in S_{Y^*}$ to be such that $y_1^*(y_1) = 1$, one has $\dist\left( y_2, F(y_1^*) \cup F(-y_1^*) \right) > 0$ by Proposition \ref{BPBpp:dual1}, as desired.
\end{proof}

We are now ready to prove that a pair $(X,Y)$ with $\dim(X)=\dim(Y)=2$ cannot satisfy property (P2). As announced, this, together with Proposition \ref{BPBpp:dual4}, provide the proof of Theorem \ref{theorem:dualpropertyisnotpossible}.

\begin{proposition} \label{BPBpp:dual3} Let $X$ and $Y$ be $2$-dimensional real Banach spaces. Then there are $\delta > 0$, $T_n \in \mathcal{L}(X, Y)$ with $\|T_n\| = 1$ for every $n\in \N$, and $x_0 \in S_X$, such that
	\begin{equation*}
		\|T_n(x_0)\| \longrightarrow 1	
	\end{equation*}
	but $\dist \left(x_0, \bigl\{ x \in S_X\colon \|T_n(x)\| = 1 \bigr\}\right) > \delta$ for every $n \in \N$.
\end{proposition}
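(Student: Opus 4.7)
My plan is to start from the operator $T$ produced by Proposition~\ref{BPBpp:dual2}, which already attains its norm at two linearly independent unit vectors, and then add a small rank-one perturbation that strictly boosts the norm at one of those vectors while fixing it at the other. Renormalising produces the sequence $\{T_n\}$; the point $x_0$ will be the norm-attaining direction that is \emph{not} boosted.

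First I will invoke Proposition~\ref{BPBpp:dual2} to obtain $T \in \mathcal{L}(X,Y)$ with $\|T\|=1$, $T(B_X)\subseteq B_Y$, and points $y_1,y_2 \in T(B_X)\cap S_Y$ satisfying $\dist(y_2,F(y_1^*)\cup F(-y_1^*))>0$ for some $y_1^* \in S_{Y^*}$ with $y_1^*(y_1)=1$. The distance condition rules out $y_2=\pm y_1$, so $y_1,y_2$ are linearly independent; since $Y$ has dimension $2$, $T$ is then a bijection, and I set $x_0 := T^{-1}(y_2)$ and $x_1 := T^{-1}(y_1)$. Both lie in $S_X$, because $\|T(x_i)\|=1$ together with $x_i \in B_X$ forces $\|x_i\|=1$.

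Since $x_0,x_1$ are linearly independent, I pick $\alpha \in X^*$ with $\alpha(x_0)=0$ and $\alpha(x_1)=1$, set $C := \|\alpha\|$, and define
\[
T_n' := T + \tfrac{1}{n}\,\alpha\otimes y_1, \qquad M_n := \|T_n'\|, \qquad T_n := T_n'/M_n,
\]
where $(\alpha\otimes y_1)(x):=\alpha(x)\,y_1$. The identities $T_n'(x_0)=y_2$ and $T_n'(x_1)=(1+\tfrac{1}{n})y_1$, together with the triangle-inequality bound $\|T_n'(x)\|\leq \|T(x)\|+|\alpha(x)|/n \leq 1+C/n$ on $S_X$, sandwich $M_n$ as $1+\tfrac1n \leq M_n \leq 1+C/n$. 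Hence $M_n\to 1$ and $\|T_n(x_0)\|=1/M_n\to 1$, while compactness of $S_X$ ensures that each $T_n$ actually attains its norm.

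For the distance estimate, the crucial step is that if $x \in S_X$ satisfies $\|T_n(x)\|=1$, equivalently $\|T_n'(x)\|=M_n\geq 1+\tfrac1n$, then the same upper bound $\|T_n'(x)\|\leq 1+|\alpha(x)|/n$ forces $|\alpha(x)|\geq 1$; combined with $\alpha(x_0)=0$ and linearity this gives $C\,\|x-x_0\|\geq |\alpha(x)|\geq 1$, so $\|x-x_0\|\geq 1/C$. Setting $\delta:=1/C$ completes the argument. The real difficulty of the whole theorem is concentrated upstream in Proposition~\ref{BPBpp:dual2}: producing a single norm-one operator whose image meets $S_Y$ in two directions with separated faces is the hard geometric content. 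Once that is granted, the rank-one perturbation above mechanically converts the existence of two linearly independent norm-attaining directions into the desired failure of property~(P2).
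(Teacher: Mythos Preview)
Your argument is correct, but it follows a different route from the paper's.

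Both proofs begin by invoking Proposition~\ref{BPBpp:dual2} to obtain a norm-one operator $T$ hitting $S_Y$ at two linearly independent points $y_1,y_2$. From there the paper perturbs \emph{multiplicatively}: it composes $T$ with $P_\lambda=\lambda\,\Id_Y+(1-\lambda)P$, where $P(y)=y_1^*(y)\,y_1$, so that $T_\lambda:=P_\lambda T$ still has norm exactly $1$ (since $P_\lambda y_1=y_1$), and any $x\in S_X$ with $\|T_\lambda x\|=1$ is forced to satisfy $T(x)\in F(y_1^*)\cup F(-y_1^*)$; the distance bound $\|x-x_0\|\geq\|T(x)-y_2\|>\delta$ then comes directly from the \emph{face-separation} part of Proposition~\ref{BPBpp:dual2}. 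You instead perturb \emph{additively} with a rank-one map $\tfrac{1}{n}\,\alpha\otimes y_1$ vanishing at $x_0$, renormalise, and read off the distance bound from the functional $\alpha\in X^*$: any norm-attaining $x$ must satisfy $|\alpha(x)|\geq 1$, whence $\|x-x_0\|\geq 1/\|\alpha\|$. Your approach is slightly more elementary and, notably, uses only the linear independence of $y_1,y_2$ rather than the full face-separation conclusion of Proposition~\ref{BPBpp:dual2}; the price is that your $\delta$ depends on the geometry of $X$ (the ``angle'' between $x_0$ and $x_1$) rather than on the explicit separation in $Y$. One cosmetic point: you obtain $\|x-x_0\|\geq 1/C$, so to match the strict inequality in the statement take, say, $\delta:=1/(2C)$ rather than $\delta:=1/C$.
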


\begin{proof} By Proposition \ref{BPBpp:dual2}, there exists an operator $T \in \mathcal{L}(X, Y)$ with $\|T\| = 1$ so that $T(B_X) \cap S_Y$ contains two points $y_1$ and $y_2$ in such a way that for some $y_1^* \in S_{Y^*}$ with $y_1^*(y_1) = 1$ we have $$\delta:= \dist (y_2, F(y_1^*) \cup F(-y_1^*)) > 0.$$ Let $P: Y \longrightarrow Y$ be the projection defined by
	\begin{equation*}
		P(y) := y_1^*(y)y_1 \qquad  (y \in Y).
	\end{equation*}
	For all $\lambda \in [0, 1]$, define
	\begin{equation*}
		P_{\lambda} := \lambda \Id_Y + (1 - \lambda) P \in B_{\mathcal{L}(Y, Y)}.
	\end{equation*}
	Since $\|T\|=1$ and $y_2 \in T(B_X)\cap S_Y$, there exists $x_0 \in S_X$ such that $T(x_0) = y_2$.  Note that if $y \in B_Y \setminus (F(y_1^*) \cup F(-y_1^*))$, then $\|P(y)\| < 1$ and $P_{\lambda}(y)$ is in the interior of $B_Y$ for all $0 \leq \lambda < 1$. Therefore,
	\begin{equation*}
		\lim_{\lambda \rightarrow 1} \|P_{\lambda}(T(x_0))\| = \lim_{\lambda \rightarrow 1} \|P_{\lambda} (y_2)\| = \lim_{\lambda \rightarrow 1} \|\lambda y_2 + (1 - \lambda)P(y_2)\| = 1.	
	\end{equation*}
	Let $T_{\lambda} := P_{\lambda} T \in \mathcal{L}(X, Y)$. If $x \in S_X$ is such that $\|T_{\lambda} (x) \| = 1$, we have that
	\begin{equation*}
		1 = \|T_{\lambda}(x)\|  = \|P_{\lambda}(T(x))\| \leq \lambda \|T(x)\| + (1 - \lambda) |y_1^*(T(x))|\|y_1\| \leq 1	
	\end{equation*}
	which implies that $|y_1^*(T(x))| = 1$ and so $T(x) \in F(y_1^*) \cup F(-y_1^*)$. So
	\begin{equation*}
		\|x - x_0\| \geq \|T(x) - T(x_0)\| = \|T(x) - y_2\| > \delta.	\qedhere
	\end{equation*}
\end{proof}


\begin{thebibliography}{99}

\bibitem{Acosta-RACSAM} \textsc{M.~D.~Acosta}, Denseness of norm attaining mappings, \emph{RACSAM} {\bf 100} (2006), 9--30.
	
	
\bibitem{AAGM} \textsc{M.~D.~Acosta, R.~M.~Aron, D.~Garc\'ia and M.~Maestre}, The Bishop--Phelps--Bollob\'as theorem for operators, \emph{J. Funct. Anal.} {\bf 294} (2008), 2780--2899.
	
\bibitem{AMS} \textsc{M.~D.~Acosta, M.~Masty{\l}o, and M.~Soleimani-Mourchehkhorti}, The Bishop--Phelps--Bollob\'as and approximate hyperplane series properties, \emph{J. Funct. Anal.} \textbf{274} (2018), no. 9, 2673--2699.

\bibitem{cas-alt23} \textsc{R.~M. Aron, B.~Cascales, and O.~Kozhushkina}, The Bishop--Phelps--Bollob\'as theorem and Asplund operators, \emph{Proc. Amer. Math. Soc.} \textbf{139} (2011), 3553--3560.

\bibitem{ACKLM}  \textsc{R.~Aron, Y.~S.~Choi, S.~K.~Kim, H.~J.~Lee, and M.~Mart\'in}, The Bishop--Phelps--Bollob\'{a}s version of Lindenstrauss properties A and B,  \emph{Trans. Amer. Math. Soc.} \textbf{367} (2015), 6085--6101.

\bibitem{BP} \textsc{E.~Bishop and R.~R.~Phelps}, A proof that every Banach space is subreflexive, Bull. Amer. Math. Soc. {\bf 67} (1961) 97--98
	
\bibitem{Bol} \textsc{B.~Bollob\'as}, An extension to the theorem of Bishop and Phelps, Bull. London. Math. Soc. {\bf 2} (1970), 181--182.

\bibitem{CKG} \textsc{B.~Cascales, A.~J.~Guirao and V.~Kadets}, A Bishop--Phelps--Bollob\'{a}s type theorem for uniform	algebras, \emph{Adv. Math.} \textbf{240} (2013), 370--382.
	
\bibitem{CGKS}  \textsc{B.~Cascales, A.~J.~Guirao, V.~Kadets and M.~Soloviova}, $\Gamma$-Flatness and Bishop--Phelps--Bollob\'as type theorems for operators, \emph{J. Funct. Anal.} \textbf{274} (2018), no. 3, 863--888.
	
\bibitem{ChKMMR-2014} \textsc{M.~Chica, V.~Kadets, M.~Mart\'in, S.~Moreno-Pulido and F.~Ramlba-Barreno}, Bishop--Phelps--Bollob\'{a}s moduli of a Banach space, \emph{J. Math. Anal. Appl.} \textbf{412} (2014), 697 --719.
	
\bibitem{Cho-Choi} \textsc{D.~H.~Cho and Y.~S.~Choi}, The Bishop--Phelps--Bollob\'{a}s theorem on bounded closed convex sets, \emph{J. Lond. Math. Soc.} \textbf{93} (2016),  502--518.
	
\bibitem{D} \textsc{S.~Dantas}, Some kind of Bishop--Phelps--Bollob\'as property, \emph{Math. Nachr.} \textbf{290} (2017), 774--784.
	
\bibitem{DGMM} \textsc{S.~Dantas, D.~Garc\'ia, M.~Maestre, and M.~Mart\'in}, The Bishop--Phelps--Bollob\'as property for compact operators, \emph{Canad. J. Math.} \textbf{70} (2018), no. 1, 53--73.
	
\bibitem{DKKLM} \textsc{S.~Dantas, V.~Kadets, S.~K.~Kim, H.~J.~Lee, and M.~Mart\'in}, On the pointwise Bishop--Phelps--Bollob\'as property for operators, \emph{preprint} (2017). Available at arXiv.org with reference \href{https://arxiv.org/abs/1709.00032}{1709.00032}

\bibitem{DKL} \textsc{S.~Dantas, S.~K.~Kim, and H.~J.~Lee}, The Bishop--Phelps--Bollob\'as point property, \emph{J. Math. Anal. Appl.} \textbf{444} (2016), 1739--1751.
	
\bibitem{Day} \textsc{M.~M.~Day}, Some characterizations of inner-product spaces, \emph{Annals of Math.} {\bf 45} (1947), 320--337.
	
\bibitem{Diestel} \textsc{J.~Diestel} \emph{Geometry of Banach spaces--selected topics.} Lecture Notes in Mathematics, Vol. 485. Springer-Verlag, 1975.
	
\bibitem{KL} \textsc{S.~K.~Kim and H.~J.~Lee}, Uniform Convexity and Bishop--Phelps--Bollob\'as Property. \emph{Canad. J. Math.} {\bf 66} (2014),  373--386.
	
\bibitem{Lind} \textsc{J.~Lindenstrauss}, On operators which attain their norm, \emph{Israel J. Math.} {\bf 1} (1963), 139--148.
	
\bibitem{N} \textsc{G.~Nordlander}, The modulus of convexity in normed linear spaces, \emph{Ark. Mat.} {\bf 4} (1960), 15--17
	
\bibitem{Talponen} \textsc{J.~Talponen}, Note on a kind of Bishop--Phelps--Bollob\'{a}s property for operators, \emph{preprint} (2017). Available at arXiv.org with reference \href{https://arxiv.org/abs/1707.03251}{1707.03251}

\bibitem{T} \textsc{N.~Tomczak-Jaegermann}, \emph{Banach-Mazur distances and finite-dimensional operator ideals}, Pitman monographs and surveys in pure and applied mathematics, 1989.
	
\end{thebibliography}
\end{document}